\newcommand\suchthat{%
 \@ifstar
  {\mathrel{}\middle|\mathrel{}}
  {\mid}%
}
\def\bb{\mathbb}
\def\cal{\mathcal}
\def\ZZ{\bb{Z}}
\def\NN{\bb{N}}
\def\NN{\bb{N}}
\def\ZZ{\bb{Z}}
\def\rm{\textrm}
\def\PSL{\rm{PSL}}
\def\soc{\rm{soc}}
\def\Frat{\rm{Frat}}
\newtheorem{theorem}{Theorem} 
\newtheorem{lemma}[theorem]{Lemma}
\newtheorem{proposition}[theorem]{Proposition}
\newtheorem{corollary}[theorem]{Corollary}
\theoremstyle{definition}
\newtheorem{definition}[theorem]{Definition}
\title[]{Recognizing $\PSL(2,p)$ in the non-Frattini chief factors
of finite groups}
\author[Duong Hoang Dung]{Duong Hoang Dung} 
\address{Fakult\"{a}t f\"{u}r Mathematik,
Universit\"{a}t Bielefeld,
Postfach 100131,
D-33501 Bielefeld, Germany.}
\email{dhoang@math.uni-bielefeld.de}
\keywords{Finite groups; Probabilistic zeta function}
\subjclass[2010]{20D06}
\begin{document}

\begin{abstract}
Given a finite group $G$, let $P_G(s)$ be the probability that $s$ randomly chosen
elements generate $G$, and let $H$ be a finite group with $P_G(s)=P_H(s)$. 
We show that if the nonabelian composition factors of $G$ and $H$ are
 $\PSL(2,p)$ for some non-Mersense prime $p\geq 5$, then $G$ and $H$ have the same non-Frattini chief factors.
\end{abstract}

\maketitle

\section{Introduction}
Let $G$ be a finite group. The probability $P_G(s)$
that $s$ randomly chosen elements generate $G$ is calculated as follows (\cite{Hall36}):
\begin{equation}\label{eq-PG}
P_G(s)=\sum_{n\geq 1}\frac{a_n(G)}{n^s},~\rm{where}~a_n(G)=\sum_{|G:H|=n}\mu_G(H).
\end{equation}
Here $\mu_G$ is the M\"obius function on the subgroup lattice
of $G$ defined recursively by $\mu_G(G)=1$ and $\mu_G(H)=-\sum_{H<K\leq G}\mu_G(K)$
if $H<G$. Considering \eqref{eq-PG} as a formal Dirichlet series associated
to $G$, if $G=\ZZ$ then 
$$P_{\ZZ}(s)=\sum_{n\geq 1}\frac{\mu(n)}{n^s}=\frac{1}{\zeta(s)},$$
where $\mu$ is the usual number-theoretic M\"{o}bius function and $\zeta(s)$
is the Riemann zeta function.
The inverse of $P_G(s)$ is then called
the \textit{probabilistic zeta function} of $G$; see \cite{Boston} and \cite{Mann96}. 

Note that if $\mu_G(H)\ne 0$ then $H$ is an intersection of maximal subgroups of $G$, cf. \cite{Hall36}. This implies $P_G(s)=P_{G/\Frat(G)}(s)$, where $\Frat(G)$ denotes the Frattini subgroup of $G$ - the intersection of the maximal subgroups of $G$. Hence, one can only hope to get back information of
$G/\Frat(G)$ from the knowledge of $P_G(s)$.

One natural question asks what we can say about $G$ and $H$ whenever
$P_G(s)=P_H(s)$. It's known that if $G$ is a simple group, then $H/\Frat(H)\cong G$, cf.~{\cite{Lu-Simple, Pat-Algebra}}. When $G$ is not simple, the problem
becomes much harder. Patassini makes a significant progress by obtaining the following results.
\begin{theorem}{\cite{Pat-Comm}}\label{abelian}
Let $G$ and $H$ be finite groups with $P_G(s)=P_H(s)$. Then $G$ and $H$
have the same non-Frattini abelian chief factors.
\end{theorem}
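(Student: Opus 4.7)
The plan rests on the multiplicative decomposition of $P_G(s)$ along a chief series, due to Gasch\"{u}tz and refined by Detomi--Lucchini: if $1 = N_0 \triangleleft \cdots \triangleleft N_r = G$ is a chief series, then
\[
P_G(s) = \prod_{i=1}^{r} P_i(s),
\]
where each $P_i(s)$ depends only on the chief factor $L_i = N_i/N_{i-1}$ and the action of $G$ on it, and Frattini chief factors contribute $P_i(s) = 1$. For an abelian non-Frattini chief factor of order $p^{n}$ the factor takes the binomial shape $1 - c/p^{ns}$, with $c$ a positive integer counting complements, whereas a non-abelian chief factor $L_i \cong S^{r_i}$ yields a more intricate Dirichlet polynomial supported on divisors of a power of $|S|$. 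I would first record these shapes and introduce, for each prime $p$, the formal product $A_p^G(s)$ of all binomials arising from non-Frattini abelian chief factors of $p$-power order; the theorem reduces to recovering $A_p^G(s)$ from $P_G(s)$ for every $p$.

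Next I would work prime-by-prime, fixing $p$ and projecting $P_G(s)$ onto its $p$-local part $P_G^{[p]}(s)$, i.e.\ the sub-sum retaining only those Dirichlet terms whose denominator is a power of $p$ (equivalently, the image in $\ZZ[[p^{-s}]]$). The binomial factors $1 - c/p^{ns}$ from abelian $p$-chief factors lie entirely in this image, while each non-abelian chief factor $L_i \cong S^{r_i}$ contributes to $P_G^{[p]}(s)$ only through its $p$-part, which is a computable Dirichlet polynomial in $p^{-s}$ determined by the M\"{o}bius function on the subgroup lattice of $S^{r_i}$. If one can show that these non-abelian $p$-parts are rigidly determined and can be cancelled off, the remainder must equal $A_p^G(s)$, whose binomial factors then directly encode the multiset of abelian $p$-chief factors.

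The principal obstacle is this isolation step: one has to rule out ``accidental'' cancellations in which a non-abelian $p$-part conspires with extra binomials to produce the same $p$-local polynomial that an honest abelian chief factor would. Two ingredients should do the job. First, a structural analysis of the M\"{o}bius function on the subgroup lattice of $S^{r}$ for non-abelian simple $S$, giving enough control over the $p$-part of $P_{S^r}(s)$ to bound it away from the binomial atoms $1 - c/p^{ks}$. Second, a unique-factorization lemma for Dirichlet polynomials in the variable $p^{-s}$, so that once the non-abelian $p$-parts are stripped, the remaining polynomial factors uniquely as a product of binomials $1 - c/p^{ks}$. Applied to both $G$ and $H$ under the hypothesis $P_G(s) = P_H(s)$, the projection $P_G^{[p]}(s) = P_H^{[p]}(s)$ then forces $A_p^G(s) = A_p^H(s)$ for every prime $p$, and comparing binomial factors yields the claimed equality of multisets of non-Frattini abelian chief factors. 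I expect the M\"{o}bius/rigidity analysis in the first ingredient to be the technical heart of the argument.
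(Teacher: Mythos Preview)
This theorem is quoted from Patassini's paper \cite{Pat-Comm} and is not proved in the present paper; it is invoked as a known input in the proof of Theorem~\ref{main theorem}. So there is no in-paper argument to compare your proposal against.

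On the proposal itself: the framework (chief-series factorization, the binomial shape $1-c/p^{ns}$ for abelian factors) is correct, but the isolation step you flag as the ``technical heart'' is not merely technical --- as stated it cannot work. The projection $F(s)\mapsto F^{[p]}(s)$ onto $p$-power indices is indeed a ring homomorphism, so the chief-series factorization passes to $p$-parts. The difficulty is that the $p$-part of a non-abelian factor can be \emph{exactly} a binomial of the type you want to reserve for abelian chief factors. Concretely, from the well-known formula for $P_{\mathrm{Alt}(5)}(s)$ one reads off
\[
P_{\mathrm{Alt}(5)}^{[5]}(s)=1-\frac{5}{5^{s}},
\]
which coincides with $P_{D_{10},\,\ZZ/5}(s)$ for the dihedral group of order $10$. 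After projecting to the $5$-part, a chief factor $\mathrm{Alt}(5)$ is therefore indistinguishable from an abelian chief factor of order $5$ with five complements. Your hoped-for lemma that non-abelian $p$-parts stay ``bounded away from'' the binomial atoms is thus false, and unique factorization in $\ZZ[p^{-s}]$ cannot separate the two sources. Nor can you ``cancel off'' the non-abelian $p$-parts without already knowing the non-abelian chief factors of $G$ and of $H$ --- and those are not assumed equal.

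What your prime-by-prime projection discards is exactly the information Patassini's argument uses: a non-abelian factor $\widetilde{P}_{L_A,A}(s)$ is supported on indices involving at least three distinct primes, and it is this multi-prime signature --- visible in the full ring $\cal{R}$ but erased in any single $\ZZ[p^{-s}]$ --- that allows the abelian binomials to be peeled off. If you want to repair the outline, drop the projection and work directly in $\cal{R}$ (or the subring $\cal{R}'$ where $n\mid a_n$), asking which irreducible divisors of $P_G(s)$ there can have the shape $1-c/q^{s}$ with $q$ a prime power.
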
 

\begin{theorem}{\cite[Theorem 3]{Pat-Trans}}\label{alternating}
Let $G$ and $H$ be finite groups whose nonabelian composition factors 
are alternating groups $\rm{Alt}(k)$ where either $5\leq k\leq 4.2\cdot 10^{16}$
or $k\geq (e^{e^{15}}+2)^3$. If $P_G(s)=P_H(s)$ then $G$ and $H$ have the same non-Frattini chief factors.
\end{theorem}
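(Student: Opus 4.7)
The plan is to leverage the factorization of $P_G(s)$ induced by a chief series of $G$. Since $P_G(s) = P_{G/\Frat(G)}(s)$, one first reduces to the Frattini-free case. By Theorem~\ref{abelian}, the non-Frattini abelian chief factors of $G$ and $H$ already agree, so the task reduces to matching their nonabelian chief factors.

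The main technical tool I would develop is a decomposition of $P_G(s)$ as a product of local Dirichlet factors, one attached to each chief factor in a fixed chief series. For a nonabelian chief factor of type $S^n$ with $S$ simple, the associated local factor should encode the isomorphism type of $S$, the integer $n$, and the action of $G/C_G(S^n)$ inside $\mathrm{Aut}(S) \wr \mathrm{Sym}(n)$, through Dirichlet coefficients supported on indices related to the maximal subgroups of $G$ that complement the chief factor. Grouping chief factors by their simple type $\mathrm{Alt}(k)$, one isolates the ``$\mathrm{Alt}(k)$-primary part'' of $P_G(s)$ via the prime factorization of $|\mathrm{Alt}(k)|$, and the goal is to show that this primary part uniquely determines the multiset of $\mathrm{Alt}(k)$-type chief factors together with their $G$-module structure.

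The arithmetic restrictions on $k$ are what drive the separation. For the small range $5 \leq k \leq 4.2 \cdot 10^{16}$, one should exploit explicit knowledge of the maximal subgroups of $\mathrm{Alt}(k)$ combined with finite case-analysis using specific primes dividing $|\mathrm{Alt}(k)|$. For very large $k \geq (e^{e^{15}}+2)^3$, results on the distribution of primes in short intervals around $k$ should supply ``witness primes'' that divide $|\mathrm{Alt}(k_0)|$ but no other relevant $|\mathrm{Alt}(k_1)|$, making the Dirichlet coefficient at the corresponding index an unambiguous signature of a single composition factor type.

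The hardest part will be controlling destructive cancellation between local factors of distinct chief factors once they are multiplied inside $P_G(s)$. To recover each individual chief factor one needs, for each candidate $\mathrm{Alt}(k)$, a specific Dirichlet index where the coefficient depends only on the chief factors of type $\mathrm{Alt}(k)$, and then peel off the factors by induction on the composition length. A further subtlety comes from chief factors of the form $\mathrm{Alt}(k)^m$ with $m > 1$: different twisted embeddings of the complement into $\mathrm{Aut}(\mathrm{Alt}(k)) \wr \mathrm{Sym}(m)$ must also be separated by the zeta data, and it is precisely for this step that the explicit range of $k$ does most of the work.
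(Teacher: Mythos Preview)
Your outline has the right starting point---the factorization of $P_G(s)$ along a chief series and the reduction via Theorem~\ref{abelian}---but it misses the structural device that makes the argument work, and as written it would not close.

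In Patassini's proof (which this paper reproduces verbatim for $\PSL(2,p)$ in the proof of Theorem~\ref{main theorem}), the point is that the ring $\cal{R}$ of finite Dirichlet series with integer coefficients is a unique factorization domain. The whole strategy is therefore: (i) show that for each nonabelian non-Frattini chief factor $A$ the attached polynomial $\widetilde{P}_{L_A,A}(s)$ is \emph{irreducible} in $\cal{R}$; (ii) conclude from $P_G(s)=P_H(s)$ and unique factorization that there is a bijection $A\leftrightarrow B$ between $\cal{CF}(G)$ and $\cal{CF}(H)$ with $\widetilde{P}_{L_A,A}(s)=\widetilde{P}_{L_B,B}(s)$; (iii) use a recognition result (the analogue of Proposition~\ref{prop: crucial}) to deduce $S_A\cong S_B$ and $n_A=n_B$, hence $A\cong B$. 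The arithmetic conditions on $k$ are used \emph{only} in step~(i), to prove irreducibility of the alternating-group factor, and in step~(iii), for the recognition statement; they are not used to find coefficient-level ``witness primes'' that survive multiplication.

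Your plan instead tries to extract individual chief factors directly from coefficients of the product, worrying about ``destructive cancellation'' and ``peeling off'' factors by induction. That difficulty is precisely what the UFD property eliminates: once each $\widetilde{P}_{L_A,A}(s)$ is known to be irreducible, there is nothing to peel and no cancellation to control. Without invoking irreducibility and unique factorization, I do not see how you would carry out the separation you describe; products of Dirichlet polynomials genuinely mix coefficients, and an index-by-index argument would require control you have not supplied. A second issue: you propose to recover the embedding of the complement into $\mathrm{Aut}(\mathrm{Alt}(k))\wr\mathrm{Sym}(m)$ from $P_G(s)$. The theorem does not assert this, and indeed the proof only shows $A\cong B$ as abstract groups; distinct monolithic groups $L_A\not\cong L_B$ with $\widetilde{P}_{L_A,A}(s)=\widetilde{P}_{L_B,B}(s)$ are not excluded, so that stronger goal is both unnecessary and likely false.
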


Using the same method,  we prove in this paper the following.

\begin{theorem}\label{main theorem}
Let $G$ and $H$ be  finite groups such that $P_G(s)=P_H(s)$. Assume
that the nonabelian composition factors of $G$ and $H$ are  $\PSL(2,p)$, 
for some non-Mersense prime $p\geq 5$.
Then $G$ and $H$ have the same non-Frattini chief factors.
\end{theorem}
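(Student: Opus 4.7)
The plan is to adapt the strategy used by Patassini in the proof of Theorem~\ref{alternating}. By Theorem~\ref{abelian}, $G$ and $H$ already share all their non-Frattini abelian chief factors, so it suffices to match the nonabelian ones; under the present hypothesis these are precisely the chief factors isomorphic to direct powers $\PSL(2,p)^{k}$. Using the factorisation of the probabilistic zeta function into Dirichlet polynomials $P_{L,\soc(L)}(s)$ indexed by the crowns of $G$ (with $L$ the primitive monolithic group associated to each non-Frattini chief factor), and dividing out the (now known) abelian contributions, I would reduce the identity $P_G(s)=P_H(s)$ to an equality of Dirichlet polynomials whose factors come only from primitive monolithic groups with socle a direct power of $\PSL(2,p)$.

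The core task is then a uniqueness-of-factorisation statement: an identity
$\prod_i P_{L_i,\soc(L_i)}(s)^{a_i}=\prod_i P_{L_i,\soc(L_i)}(s)^{b_i}$, with the $L_i$ primitive monolithic and all socles direct powers of $\PSL(2,p)$, should force $a_i=b_i$ for every~$i$. I would establish this prime-by-prime, isolating the $r$-part of each Dirichlet polynomial for a well-chosen prime~$r$ and comparing leading coefficients. The non-Mersenne hypothesis enters decisively at this point: for $p\geq 5$ prime and non-Mersenne, $(p+1)/2$ admits an odd prime divisor~$r$, and this $r$ controls the dihedral maximal subgroup $D_{p+1}\leq \PSL(2,p)$ and supplies a distinguishing coefficient in $P_{L,\soc(L)}(s)$. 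If $p$ were Mersenne then $(p+1)/2$ would be a power of $2$, no such odd~$r$ would exist, and the distinguishing argument would break down.

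The principal obstacle is precisely this separation step. For each $k$ there are several non-equivalent primitive monolithic groups $L$ with socle $\PSL(2,p)^{k}$, arising from the two almost simple extensions of $\PSL(2,p)$ (namely $\PSL(2,p)$ and $\mathrm{PGL}(2,p)$) combined with the possible wreath-product actions on the $k$ simple factors, and their Dirichlet polynomials share many common terms. Disentangling the product therefore requires more than a single prime: I would expect to track the $r$-part \emph{and} the $p$-part simultaneously, using the characteristic role of~$p$ in $\PSL(2,p)$ to pin down the wreath structure and $r$ to pin down the almost simple extension. Once $a_i=b_i$ is established for every $L_i$, combining with Theorem~\ref{abelian} yields the desired matching of all non-Frattini chief factors.
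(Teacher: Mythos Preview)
Your overall plan shadows the paper's, but you miss the one structural observation that makes the argument go through. The ring $\cal{R}$ of finite Dirichlet polynomials is a \emph{factorial} domain, and the paper's main technical input (Theorem~\ref{theorem irred} and Corollary~\ref{cor irred}) is that each factor $\widetilde{P}_{L_A,A}(s)$ with $\soc(L_A)\cong\PSL(2,p)^n$ and $p$ non-Mersenne is \emph{irreducible} in $\cal{R}$. Once that is established, unique factorisation in a UFD hands you for free a bijection between the multisets of factors coming from $G$ and from $H$; there is no need for the ad hoc ``prime-by-prime, compare leading coefficients'' separation you outline. The obstacle you yourself flag---that several inequivalent monolithic $L$ share the same socle power and their Dirichlet polynomials overlap heavily---is exactly why your direct approach is not viable: without irreducibility you would have to exclude identities of the shape $\widetilde{P}_{L_1}\widetilde{P}_{L_2}=\widetilde{P}_{L_3}\widetilde{P}_{L_4}$ with all four distinct, and nothing in your sketch does that. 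The non-Mersenne hypothesis does enter through an odd prime divisor of $p+1$ (a Zsigmondy prime for $\langle p,2\rangle$), but in the paper it is used inside the \emph{irreducibility} proof (via Lemmas~\ref{irred test 1} and~\ref{irred test 2}), not to disentangle a product.

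There is a second gap. Even after matching irreducible factors, the conclusion you need is $A\cong B$ (equality of chief factors, i.e.\ of socles), not merely that the associated monolithic groups $L_A$ and $L_B$ produce the same Dirichlet polynomial. The paper closes this in two steps: first $n_A=n_B$ from the substitution $s\mapsto ns-n+1$ in Proposition~\ref{Seral} together with \cite[Proposition~27]{Pat-Trans}, and then $S_A\cong S_B$ via a genuine recognition result (Proposition~\ref{prop: crucial}) which classifies the almost simple groups $Z$ satisfying $P^{(r)}_{Z,\soc(Z)}(s)=P^{(r)}_{X,\PSL(2,p)}(s)$ for all small primes $r$, using the known list of primitive groups of degree $p+1$ containing a $p$-cycle. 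Your proposal does not address this recognition step; tracking the $p$-part and an $r$-part of the polynomial does not by itself tell you that the underlying simple group is $\PSL(2,p)$ rather than, say, $\mathrm{Alt}(p+1)$ or $M_{24}$.
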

From the proof of Theorem~\ref{main theorem}, one has the following consequence.
\begin{corollary}
Let $G$ and $H$ be finite groups such that $P_G(s)=P_H(s)$. Assume
that the nonabelian composition factors of $G$ and $H$ are either $\PSL(2,p)$, 
for some non-Mersense prime $p\geq 5$ or alternating 
groups $\rm{Alt}(k)$ with $k$ satisfying the hypothesis of Theorem~\ref{alternating}.
Then $G$ and $H$ have the same non-Frattini chief factors.
\end{corollary}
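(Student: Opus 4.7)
The plan is to reduce the corollary to the three already-established theorems by a separation-of-families argument. Write $\mathcal{F}_1 = \{\PSL(2,p) : p \geq 5 \text{ non-Mersenne prime}\}$ and $\mathcal{F}_2 = \{\mathrm{Alt}(k) : k \text{ as in Theorem \ref{alternating}}\}$. By hypothesis every nonabelian composition factor of $G$ and of $H$ lies in $\mathcal{F}_1 \cup \mathcal{F}_2$, and by Theorem \ref{abelian} the non-Frattini abelian chief factors of $G$ and of $H$ already coincide; thus only the nonabelian non-Frattini chief factors require further attention.

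The key step is to revisit the proof of Theorem \ref{main theorem} and extract from it a procedure, depending only on the Dirichlet series $P_G(s)$, that recovers for each $S \in \mathcal{F}_1$ and each $t \geq 1$ the multiplicity of $S^t$ among the non-Frattini chief factors of $G$; the parenthetical phrase ``From the proof of Theorem \ref{main theorem}'' in the corollary's statement indicates that such a procedure is made explicit there. An analogous extraction for $\mathcal{F}_2$ is furnished by the proof of Theorem \ref{alternating}. Applying both procedures to the identity $P_G(s) = P_H(s)$ yields equality of the multiplicities of every $\PSL(2,p)$-type and every $\mathrm{Alt}(k)$-type non-Frattini chief factor of $G$ and $H$, which combined with Theorem \ref{abelian} gives the conclusion.

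The hard part is to verify that the two extraction procedures remain valid in the mixed setting, that is, chief factors from $\mathcal{F}_2$ do not contaminate the $\PSL$-extraction, and vice versa. In the Patassini framework, the contribution of a nonabelian chief factor $S^t$ to $P_G(s)$ is a Dirichlet polynomial whose arithmetic support is governed by $|S|$ together with certain module-theoretic data. Since the orders $|\PSL(2,p)| = p(p^2-1)/2$ and $|\mathrm{Alt}(k)| = k!/2$ share few prime divisors for the admissible ranges of $p$ and $k$, the $\PSL$-fingerprint and the $\mathrm{Alt}$-fingerprint should be separable, with the sole coincidence $\PSL(2,5) \cong \mathrm{Alt}(5)$ counted consistently by either method. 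Making this separation precise, and in particular checking that the witnesses used in the proof of Theorem \ref{main theorem} continue to isolate exactly the $\PSL(2,p)$-type multiplicities in the presence of alternating chief factors, is the technical heart of the reduction.
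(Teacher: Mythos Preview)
Your overall framing contains a real gap. You propose to run two separate ``extraction procedures'' (one for $\mathcal{F}_1$, one for $\mathcal{F}_2$) and then argue that they do not interfere, using that $|\PSL(2,p)|$ and $|\mathrm{Alt}(k)|$ ``share few prime divisors''. This last claim is false in general: $|\mathrm{Alt}(k)|=k!/2$ is divisible by every prime $\leq k$, so for $k>p$ it contains \emph{all} prime divisors of $|\PSL(2,p)|$. No arithmetic separation of supports is available, and the ``technical heart'' you describe cannot be carried out along those lines.

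The paper's route is both simpler and avoids this problem entirely. The proof of Theorem~\ref{main theorem} goes through unchanged in the mixed setting once you observe two things. First, every nonabelian non-Frattini chief factor $A$ of $G$ (or $H$) has $\widetilde{P}_{L_A,A}(s)$ irreducible in $\mathcal{R}$: for $S_A\in\mathcal{F}_1$ this is Corollary~\ref{cor irred}, and for $S_A\in\mathcal{F}_2$ it is \cite[Theorem~2]{Pat-Trans}. Unique factorization in $\mathcal{R}$ then gives a single bijection $\mathcal{CF}(G)\to\mathcal{CF}(H)$ with $\widetilde{P}_{L_A,A}(s)=\widetilde{P}_{L_B,B}(s)$; no family-by-family extraction is needed. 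Second, the recognition step requires no separation hypothesis: Proposition~\ref{prop: crucial} takes an almost simple $X$ with socle $\PSL(2,p)$ and an \emph{arbitrary} almost simple $Z$, and concludes $\soc(Z)\cong\PSL(2,p)$; the analogous \cite[Proposition~28]{Pat-Trans} does the same for $\mathrm{Alt}(k)$. So once $A$ and $B$ are matched and $n_A=n_B$ is obtained as in the proof, applying whichever recognition proposition fits $S_A$ forces $S_B\cong S_A$ regardless of which family $S_B$ a~priori lies in. The ``contamination'' issue you worry about is already handled by the generality of these propositions, not by any disjointness of prime supports.
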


In order to mimic the method from \cite{Pat-Trans}, we  prove the following result.
\begin{theorem}\label{theorem irred}
Let $L$ be a monolithic primitive group with socle $\soc(L)\cong\PSL(2,p)^n$ with $p\geq 5$  
a non-Mersense prime. Then the Dirichlet polynomial 
$$P_{L,\soc(L)}(s)=\sum_{m\in\NN}\frac{b_m({L,\soc(L)})}{m^s},~\rm{with}~b_m({L,\soc(L))}=\sum_{\substack{|L:H|=m\\ H\soc(L)=L}}\mu_L(H)$$
is irreducible in the ring of finite Dirichlet polynomials.
\end{theorem}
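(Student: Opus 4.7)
The plan is to adapt Patassini's strategy from \cite{Pat-Trans} to the present family. The ring of finite Dirichlet polynomials with integer coefficients is a unique factorisation domain, naturally identified with the polynomial ring $\ZZ[x_\ell:\ell\text{ prime}]$ via $m^{-s}\mapsto \prod_i x_{\ell_i}^{e_i}$ for $m=\prod_i\ell_i^{e_i}$, so the task is to rule out a nontrivial factorisation $P_{L,\soc(L)}(s)=A(s)B(s)$ into two non-unit Dirichlet polynomials.

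I would begin by writing down $P_{L,\soc(L)}(s)$ explicitly. For a monolithic primitive group $L$ with non-abelian socle $N\cong S^n$, the classification of maximal supplements to $N$ expresses $P_{L,\soc(L)}(s)$ in terms of data attached to the almost simple group $X=N_L(S_1)/C_L(S_1)$, where $S_1\cong\PSL(2,p)$ is a single simple factor, together with the permutation action of $L/N$ on the $n$ factors. Dickson's classification of the maximal subgroups of $\PSL(2,p)$ then enumerates the core-free maximal subgroups of $X$: a Borel subgroup of index $p+1$, two dihedral subgroups of indices $p(p\pm 1)/2$, and (for sporadic congruences of $p$) subgroups of type $A_4$, $S_4$, or $A_5$. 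Summing the M\"obius contributions type by type produces an explicit Dirichlet polynomial with a short list of terms.

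The key ingredient is the non-Mersenne hypothesis: since $p+1$ is not a power of $2$, there exists an odd prime $q\mid p+1$, and such a $q$ cannot divide $p(p-1)$. Hence the variable $x_q$ appears in only one family of monomials of $P_{L,\soc(L)}(s)$, namely those coming from Borel-type supplements (whose indices are divisible by $p+1$). Following the strategy of \cite[Section~3]{Pat-Trans}, I would examine the ``Newton polygon'' in $x_q$ of any putative factorisation $A(s)B(s)$ and show that all $x_q$-containing monomials must lie in a single factor, forcing the other factor to be supported on divisors of $p(p-1)$; a comparison with the constant terms then collapses the second factor to a unit. The main obstacle I anticipate is the reduction from arbitrary $n$ to the almost simple case: the interaction between the action of $L/N$ on the simple factors and the lattice structure of maximal supplements demands careful bookkeeping to ensure that the $q$-adic obstruction truly detects the Borel contribution at every layer. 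A secondary nuisance is the handful of exceptional primes $p\in\{5,7,11,19,29,59\}$ producing $A_5$-, $S_4$-, or $A_4$-type supplements, each of which requires separate verification that the extra terms do not enable a nontrivial factorisation.
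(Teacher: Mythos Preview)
Your sketch correctly identifies the arithmetic handle: an odd prime $q\mid p+1$ with $q\nmid p(p-1)$, which exists precisely by the non-Mersenne hypothesis and coincides with the paper's Zsigmondy prime $r$ for $\langle p,2\rangle$. Beyond that, however, your route diverges from the paper's, and in two places the divergence is a genuine gap rather than an alternative argument.

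\emph{Reduction to the almost simple case.} You rightly anticipate that passing from general $n$ to $n=1$ is the main obstacle, but the paper does not do this by direct analysis of supplements in $L$. It invokes Jim\'enez-Seral's identity (Proposition~\ref{Seral}): for any prime $t$ dividing $|S|$,
\[
P^{(t)}_{L,\soc(L)}(s)=P^{(t)}_{X,S}(ns-n+1),
\]
which in one stroke replaces the monolithic polynomial by the almost-simple one after a change of variable. Your plan offers no substitute for this, so the case $n>1$ remains unresolved.

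\emph{The irreducibility step.} The paper does not argue via a Newton polygon in the single variable $x_q$. Instead it introduces a second prime $t$, an odd divisor of $p-1$, and first proves irreducibility of the restricted polynomial $P^{(t)}_{L,\soc(L)}(s)$: after the Seral substitution and the explicit subgroup data for $\PSL(2,p)$ and $\mathrm{PGL}(2,p)$, this polynomial takes the shape $1-ax_r^m$ with $a=bx_p^n+c$ and $b,c\ne 0$, so Lemma~\ref{irred test 1} applies. Irreducibility of the full $P_{L,\soc(L)}(s)$ is then recovered by Lemma~\ref{irred test 2} with $\pi=\{p,r\}$, using that $P^{(\{p,r,t\})}_{L,\soc(L)}(s)=1$. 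Your one-prime approach is not an obvious shortcut: the assertion that $x_q$ occurs only in terms with index divisible by $p+1$ is inaccurate, since the dihedral subgroup of order $p-1$ has index $p(p+1)/2$, which is divisible by $q$ but not by $p+1$; and once intersections and the sporadic $A_4$, $S_4$, $A_5$ subgroups are included, the $x_q$-degree need not be constant across the non-constant monomials, so the ``force one factor onto $p(p-1)$-support'' step does not follow as stated.
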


\subsection*{Notations}
In this paper, groups are always finite. Given a finite Dirichlet series $F(s)=\sum_{n\in\NN}a_n/n^s$ and
a set of prime numbers $\pi$, we denote by $F^{(\pi)}(s)$ the Dirichlet series obtained from $F(s)$ by deleting the
$a_n/n^s$ with $n$ divisible by a prime in $\pi$. For a positive integer $m$, we denote by $\pi(m)$ the set of prime divisors of $m$.  For a given finite group $H$, we write $\pi(H)$ for $\pi(|H|)$.

\section{Preliminaries}\label{sec 2}
Given a normal subgroup $N$ of $G$, it's shown in {\cite[Section 2.2]{Brown00}}
that 
\begin{equation}\label{eq-Brown-fac.}
P_G(s)=P_{G/N}(s)P_{G,N}(s),
\end{equation}
where
$$P_{G,N}(s)=\sum_{n\in\NN}\frac{b_n(G,N)}{n^s},~\rm{with}~b_n(G,N)=\sum_{\substack{|G:H|=n\\ HN=G}}\mu_G(H).$$
By taking a chief series 
$$\Sigma:1=G_k<\cdots< G_1<G_0=G,$$
and iterating equation \eqref{eq-Brown-fac.}, we can express $P_G(s)$ as a product
of Dirichlet polynomials indexed by the non-Frattini chief factors in $\Sigma$:
\begin{eqnarray}\label{eq-P_G-fac.}
P_G(s)=\prod_{G_i/G_{i+1}\not\leq\Frat(G/G_{i+1})}P_{G/G_{i+1},G_i/G_{i+1}}(s).
\end{eqnarray}
It was proved in \cite{DeLu03} that the factors in
\eqref{eq-P_G-fac.} are independent of the choice of the series $\Sigma$. Moreover,
it also describes how those factors look like as follows.

Let $A$ be a minimal normal subgroup of $G$. The monolithic primitive group associated to $A$
is defined as
$$
L_A:=\left\{\begin{array}{ll}
A\rtimes G/C_G(A) & \rm{if $A$ is abelian},\\
G/C_G(A) & \rm{otherwise}.
\end{array}
\right.$$
Note that $A\cong\soc(L_A)$. Define
\begin{align*}
\widetilde{P}_{L_A,1}(s)&=P_{L_A,A}(s),\\
\widetilde{P}_{L_A,i}(s)&=P_{L_A,A}(s)-\frac{(1+q_A+\cdots+q_A^{i-2})\gamma_A}{|A|^s},~\rm{for}~i>1,
\end{align*}
where $\gamma_A=|C_{\rm{Aut}(A)(L_A/A)}|$ and $q_A=|\rm{End}_{L_A}(A)|$ if $A$ is abelian, $q_A=1$ otherwise.
If $A=H/K$ is a non-Frattini chief factor of $G$ then $P_{G/K,H/K}(s)=\widetilde{P}_{L_A,A}(s)$
where $\widetilde{P}_{L_A,A}(s)$ is one of the $\widetilde{P}_{L_A,i}(s)$ for a suitable choice $i$, cf.~{\cite[Theorem 17]{DeLu03}}.

If $A$ is abelian then
$$P_{L_A,A}(s)=1-\frac{c(A)}{|A|^s},$$
where $c(A)$ is the number of complements of $A$ in $L_A$, cf.~{\cite{Gaschutz59}}. Assume that $A\cong S_A^n$ is nonabelian. Let $X_A$ be the subgroup of $\rm{Aut}(S_A)$ induced by the conjugation
action of the normalizer in $L_A$ of a simple component of $S_A^n$. Then $X_A$ is an almost simple group
with socle $S_A$, cf.{\cite[Section 2]{DaLu07}}.
\begin{proposition}{\cite[Theorem 5]{Seral}}\label{Seral}
$$\widetilde{P}^{(r)}_{L_A,A}(s)=P^{(r)}_{L_A,A}(s)=P^{(r)}_{X_A,S_A}(ns-n+1)$$
for every prime divisor $r$ of the order of $S_A$.
\end{proposition}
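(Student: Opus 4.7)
The plan is to adapt Patassini's strategy from \cite{Pat-Trans} for the alternating case, replacing alternating groups with $\PSL(2,p)$. Suppose for contradiction that $P_{L,\soc(L)}(s) = F_1(s) F_2(s)$ is a nontrivial factorization with both factors having constant term $1$. The deletion operation $F \mapsto F^{(\pi)}$ is multiplicative, since an index $nm$ is coprime to a prime $r$ if and only if both $n$ and $m$ are. Hence, for each prime $r \mid |\PSL(2,p)|$, Proposition~\ref{Seral} yields
\[
F_1^{(r)}(s)\,F_2^{(r)}(s) \;=\; P^{(r)}_{L,\soc(L)}(s) \;=\; P^{(r)}_{X,S}\bigl(ns-n+1\bigr),
\]
where $X = X_{\soc(L)}$ is almost simple with socle $S \cong \PSL(2,p)$. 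This converts the irreducibility problem into understanding factorizations of the twisted polynomial on the right for suitably chosen $r$.

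Next I would compute $P_{X,S}(s)$ explicitly using Dickson's classification of maximal subgroups of $\PSL(2,p)$ — a Borel subgroup of index $p+1$, dihedral subgroups $D_{p\pm 1}$ of indices $p(p\mp 1)/2$, and exceptional subgroups $A_4$, $S_4$, $A_5$ for specific primes — to obtain a finite Dirichlet polynomial whose nonzero coefficients sit at explicit indices. The non-Mersenne hypothesis enters crucially here: if $p$ were Mersenne, then $p+1 = 2^k$ would be a pure power of $2$, and the lowest-index term of $P_{X,S}(s)$ would be at a $2$-power; the non-Mersenne assumption guarantees that $p+1$ has an odd prime divisor $q \neq p$, endowing this lowest term with a prime other than $2$ that can serve as a distinguishing witness.

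I would then argue that for a well-chosen prime $r \in \pi(S)$ — most naturally $r = p$, since the subgroup structure of $\PSL(2,p)$ other than the Borel meets $S$ in a subgroup of $p'$-order — the polynomial $P^{(r)}_{X,S}(ns-n+1)$ admits no nontrivial factorization, forcing $F_i^{(r)} = 1$ for some $i \in \{1,2\}$. By combining this information for several primes $r \in \pi(S)$, in particular $r = p$ and the odd prime divisor $q$ of $p+1$ provided by the non-Mersenne hypothesis, one aims to conclude $F_i = 1$ outright, yielding the contradiction.

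The main obstacle is the combinatorial chaining step: showing that $F_i^{(r)}=1$ for a carefully chosen set of primes $r$ actually forces $F_i$ itself to be trivial. Equivalently, one must exclude the scenario that every non-constant term of $F_i$ has index divisible by every prime in some pre-chosen subset of $\pi(S)$. The required low-index bookkeeping rests on the observation that the smallest nontrivial index appearing in $P_{L,\soc(L)}(s)$ is $(p+1)^n$, which is coprime to $p$, so a nontrivial factor must contribute a term at an index coprime to $p$; combined with the analogous argument for $q$, this should pin down the support of $F_i$ tightly enough to force triviality. Carrying out this chaining cleanly, while handling the exceptional small-$p$ contributions from the sporadic $A_4, S_4, A_5$ maximal subgroups of $\PSL(2,p)$ without contaminating the argument, is likely to be the most delicate technical point.
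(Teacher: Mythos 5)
Your proposal does not prove the statement in question. Proposition~\ref{Seral} is the identity $\widetilde{P}^{(r)}_{L_A,A}(s)=P^{(r)}_{L_A,A}(s)=P^{(r)}_{X_A,S_A}(ns-n+1)$, a structural result of Jim\'enez-Seral relating the coefficients $b_m(L_A,A)$ of a monolithic primitive group $L_A$ with nonabelian socle $A\cong S_A^n$ to the coefficients of the associated almost simple group $X_A$, after discarding every index divisible by a fixed prime $r$ dividing $|S_A|$. What you have written is instead an outline of a proof of Theorem~\ref{theorem irred} (irreducibility of $P_{L,\soc(L)}(s)$): you open by assuming a nontrivial factorization $P_{L,\soc(L)}(s)=F_1(s)F_2(s)$ and argue toward a contradiction. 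Worse, your first displayed equation \emph{invokes} Proposition~\ref{Seral} to rewrite $P^{(r)}_{L,\soc(L)}(s)$ as $P^{(r)}_{X,S}(ns-n+1)$, so read as a proof of that proposition the argument is circular. (For calibration: the paper itself gives no proof of this proposition; it is quoted from \cite{Seral}.)

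A genuine proof has nothing to do with factorization in the ring of Dirichlet polynomials. The first equality is essentially immediate: for nonabelian $A$ the polynomials $\widetilde{P}_{L_A,i}(s)$ and $P_{L_A,A}(s)$ differ only in the coefficient of $|A|^{-s}$, and $|A|=|S_A|^n$ is divisible by $r$, so that term is deleted from both sides. The substantive second equality requires analyzing the subgroups $H\leq L_A$ with $HA=L_A$, $\mu_{L_A}(H)\neq 0$, and $|L_A:H|=|A:H\cap A|$ coprime to $r$: one shows that for such $H$ the subdirect subgroup $H\cap A$ of $S_A^n$ can have no diagonal components (a diagonal component forces the index to be divisible by $|S_A|$, hence by $r$), so $H\cap A$ is conjugate to a direct product $K^n$ with $K$ arising from a supplement $H_0$ of $S_A$ in $X_A$; this gives $|L_A:H|=|X_A:H_0S_A|\cdot|S_A:K|^n$ with matching M\"obius values and multiplicities, which is exactly the effect of the substitution $s\mapsto ns-n+1$ on $P^{(r)}_{X_A,S_A}$. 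None of this bookkeeping appears in your proposal, and the material you do present (Dickson's list of maximal subgroups, the non-Mersenne hypothesis, Zsigmondy-type primes) belongs to the proof of Theorem~\ref{theorem irred}, not to this proposition.
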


\section{Proof of Theorem~\ref{theorem irred}}
In this section, we will prove Theorem~\ref{theorem irred}. It enables
us to obtain the irreducibility of the $\widetilde{P}_{L_A,A}(s)$ in 
the factorization of $P_G(s)$. First of all, we will recall some useful results.

Let $\cal{R}$ be the ring of Dirichlet polynomials with integer coefficients
$$\cal{R}=\left\{\sum_{n\in\NN}\frac{a_n}{n^s}: a_n\in\ZZ,~\{n:a_n\ne 0\}|<\infty \right\}.$$
Given a set of prime numbers $\pi$, denote by $X_\pi$ the set
of commuting indeterminates $\{x_r:r\in\pi\}$. Let $\cal{R}'$ be the subring
of $\cal{R}$ containing polynomials $\sum_{n\in\NN}a_n/n^s$ such that $a_n\in n\ZZ$ for every $n$, and $\cal{R}'_{\pi}$ its subring such that $a_n\ne 0$ 
whenever $n$ is a $\pi'$ number. The rings $\cal{R}, \cal{R}'$ and $\cal{R}'_\pi$ are factorial rings, cf.\cite{DaLuMo04}. There is an isomorphism
$\Phi$ between $\cal{R}'_\pi$ and the polynomial ring $\ZZ[X_\pi]$ mapping $p^{1-s}$
to $x_p$ for every $p\in\pi$.
\begin{lemma}\label{irred test 1}{\cite[Lemma 10]{Pat-Israel}}
Let $D$ be a factorial domain. If the polynomial $f(x)=1-ax^m\in D[x]$ is
reducible then $a$ or $-a$ is a non-trivial power in $D$.
\end{lemma}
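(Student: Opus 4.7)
The plan is to reduce the question of when $f(x)=1-ax^m$ factors to the classical Capelli criterion for $x^m-a$ via a reversal trick, and then descend from $K=\mathrm{Frac}(D)$ back to $D$ using Gauss's lemma.

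Suppose $f(x)=g(x)h(x)$ is a nontrivial factorization in $D[x]$ with $\deg g=k$ and $1\leq k\leq m-1$. Since $f(0)=1$, the constant terms of $g$ and $h$ are units multiplying to $1$, so after rescaling $g\mapsto g(0)^{-1}g$ and $h\mapsto g(0)h$ we may assume $g(0)=h(0)=1$. Then the reverse polynomials $\tilde g(x)=x^kg(1/x)$ and $\tilde h(x)=x^{m-k}h(1/x)$ are monic of the same positive degrees as $g$ and $h$, and satisfy $\tilde g(x)\tilde h(x)=x^mf(1/x)=x^m-a$. Thus $x^m-a$ is reducible in $D[x]$.

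Next, by Capelli's theorem over $K$, this reducibility forces either (i) $a=b^p$ for some prime $p\mid m$ and some $b\in K$, or (ii) $4\mid m$ and $-4a=b^4$ for some $b\in K$. Because $D$ is factorial, writing $b=u/v$ in lowest terms and comparing prime valuations in $v^pa=u^p$ (resp.\ $v^4(-4a)=u^4$) forces $v\in D^\times$, so $b\in D$. Case (i) immediately gives that $a$ is a nontrivial $p$-th power in $D$. In case (ii), a short analysis of the $2$-adic valuation (treating $2$ as prime or as a unit in $D$) shows $2\mid b$ in $D$; writing $b=2c$ yields $-a=4c^4=(2c^2)^2$, so $-a$ is a square. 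Either way, $a$ or $-a$ is a nontrivial power in $D$.

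The main ingredient, and the expected main obstacle, is Capelli's theorem in the form I need, which I would prove directly by splitting $x^m-a$ over an algebraic closure $\bar K$: its roots are $\zeta^i\alpha$ with $\alpha^m=a$ and $\zeta$ a primitive $m$-th root of unity, and any degree-$k$ factor in $D[x]$ corresponds to a $k$-subset $I\subset\{0,\dots,m-1\}$. The constant term of such a factor, raised to the $m$-th power, gives an element $(-1)^{km}a^k\in D$, and a prime-by-prime valuation argument in the UFD $D$, using $m_0=m/\gcd(m,k)\geq 2$, pins down the needed power structure on $a$. The delicate point is the root-of-unity ambiguity when extracting $d$-th roots in a UFD ($d=\gcd(m,k)$), together with the parity-dependent sign $(-1)^{km}$; these two issues are precisely what produce the ``$a$ or $-a$'' dichotomy in the conclusion and the Capelli case (ii) exception, and they must be handled by Bezout on $m_0,k_0$ together with integral closedness of $D$.
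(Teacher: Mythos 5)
The paper does not prove this lemma; it imports it verbatim as Lemma~10 of \cite{Pat-Israel}, so there is no in-paper proof to compare against, and your argument has to be judged on its own. Your route --- normalize the constant terms of the factors, reverse $1-ax^m$ to the monic polynomial $x^m-a$, apply the Vahlen--Capelli criterion over $K=\mathrm{Frac}(D)$, then descend to $D$ using that a factorial domain is integrally closed --- is the standard way to obtain this statement and is essentially correct; the reversal step and the case~(i) descent ($v\in D^\times$, hence $a=b^p$ in $D$ with $p\geq 2$) are fine.

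The one step that does not survive scrutiny is your handling of case~(ii). From $-4a=b^4$ with $b\in D$ you assert that ``a short analysis of the $2$-adic valuation (treating $2$ as prime or as a unit in $D$)'' gives $2\mid b$ in $D$. In a general factorial domain $2$ need be neither prime nor a unit: in $\ZZ[i]$ one has $2=-i(1+i)^2$, and taking $b=1+i$ gives $-4a=b^4=-4$, i.e.\ $a=1$, yet $2\nmid b$. So the intermediate claim $b=2c$ with $c\in D$ can fail. The conclusion you want is nevertheless true and follows at once from the tool you already invoke in your last paragraph: $-a=\left(b^2/2\right)^2$ with $b^2/2\in K$ a root of the monic polynomial $x^2+a\in D[x]$, hence $b^2/2\in D$ by integral closedness, and $-a$ is a square in $D$ (in the $\ZZ[i]$ example, $b^2/2=i$ and $-1=i^2$). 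With that sentence substituted for the $2$-adic argument the proof is complete; the closing sketch of a from-scratch proof of Capelli's theorem is unnecessary if you simply cite the classical result.
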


For a Dirichlet polynomial $F(s)=\sum_{n\in\NN}a_n/n^s\in\cal{R}$ and $v$ a 
prime number, denote by $|F(s)|_v$ the maximal $v$-part of $n$ such that $a_n\ne 0$.

\begin{lemma}\label{irred test 2}{\cite[Lemma 12]{Pat-Israel}}
Let $h(s)=\sum_{k\in\NN}$ be a Dirichlet polynomial and let $m$ be the
least common multiple of $\{k:a_k\ne 0\}$. Assume the following hold:
\begin{itemize}
\item There exists a set of prime number $\pi_0$ such that $h^{(\pi_0)}(s)$
is irreducible.
\item There exists a non-empty subset $\pi$ of $\pi(m)$ such that 
$|h^{(\pi_0)}(s)|_v=|m|_v$ for all $v\in\pi$.
\end{itemize}
Then $h(s)$ is irreducible in $\cal{R}$ if and only if $(h(s),h^{(\pi)}(s))=1$.
\end{lemma}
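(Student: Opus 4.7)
The plan is to apply Lemma~\ref{irred test 2} with $\pi_0=\{p\}$, which reduces the problem to (i) irreducibility of $P^{(p)}_{L,\soc(L)}(s)$ and (ii) a gcd condition $(P_{L,\soc(L)}(s),P^{(\pi)}_{L,\soc(L)}(s))=1$ for a suitable $\pi$. Proposition~\ref{Seral} is the main bridge: it identifies $P^{(p)}_{L,\soc(L)}(s)=P^{(p)}_{X_A,\PSL(2,p)}(ns-n+1)$, where $X_A\in\{\PSL(2,p),\rm{PGL}(2,p)\}$ is the almost simple group induced by $L$.

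I would first compute $P^{(p)}_{X_A,\PSL(2,p)}(s)$. The subgroups $H\le X_A$ with $p\nmid|X_A:H|$ and $H\cdot\PSL(2,p)=X_A$ are exactly those containing a fixed Sylow $p$-subgroup $P$ and (when $X_A=\rm{PGL}(2,p)$) not contained in $\PSL(2,p)$. Any such $H$ has order $pd$ with $d\mid p-1$ and $d<p$, so $P\triangleleft H$ and $H\le N_{X_A}(P)$, the Borel of $X_A$. A direct Möbius computation on the resulting lattice (the divisor lattice of $(p-1)/2$ or $p-1$, with $X_A$, and $\PSL(2,p)$ when $X_A=\rm{PGL}(2,p)$, adjoined on top) shows $\mu_{X_A}$ vanishes on everything strictly between $P$ and the Borel, yielding
\[
P^{(p)}_{X_A,\PSL(2,p)}(s)=1-(p+1)^{-s},
\]
so that
\[
P^{(p)}_{L,\soc(L)}(s)=1-\frac{(p+1)^{n-1}}{(p+1)^{ns}}.
\]

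Next I would show this restricted polynomial is irreducible in $\mathcal R$. A direct coefficient comparison works: in any factorization $P^{(p)}_{L,\soc(L)}(s)=f(s)g(s)$ with $f,g$ not units, both $f,g$ must have their first nonconstant term at the same index $(p+1)^n$, which then forces the simultaneous conditions $a+b=-(p+1)^{n-1}$ and $ab=0$ on those coefficients --- impossible. (Alternatively, Lemma~\ref{irred test 1} applied in $\mathcal R'_{\pi(p+1)}$ via the isomorphism $\Phi$ gives the same conclusion, using the non-Mersenne hypothesis to produce an odd prime $\ell\mid p+1$ and comparing the $2$-adic and $\ell$-adic valuations of $\pm(p+1)^{n-1}$.)

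To verify the hypothesis of Lemma~\ref{irred test 2}, set $\pi=\{\ell\}$ for an odd prime $\ell\mid p+1$ (which exists since $p$ is non-Mersenne). Since $\ell$ is odd and $\gcd(p-1,p+1)\le 2$, we have $\ell\nmid p-1$, so $v_\ell(|\soc(L)|)=n\,v_\ell(p+1)=v_\ell((p+1)^n)$, matching $|P^{(p)}_{L,\soc(L)}|_\ell$ with $|m|_\ell$ for $m$ the lcm of denominators of nonzero coefficients of $P_{L,\soc(L)}(s)$. Finally, to establish the gcd condition $(P_{L,\soc(L)}(s),P^{(\ell)}_{L,\soc(L)}(s))=1$, I would apply Proposition~\ref{Seral} with $r=\ell$: any common irreducible factor must divide $P^{(\{p,\ell\})}_{L,\soc(L)}(s)$, which inherits the sparse form of $P^{(p)}$ pulled back, and a coefficient/valuation comparison forces this factor to be a unit. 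This final gcd step is the main obstacle --- while the $p$-restriction has a transparent closed form, the $\ell$-restriction requires a finer analysis of subgroups of $X_A$ containing a Sylow $\ell$, and the non-Mersenne hypothesis is critical both for the existence of $\ell$ and for the coprimality arguments.
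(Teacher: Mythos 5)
The statement you were asked to prove is Lemma~\ref{irred test 2} itself: an abstract irreducibility criterion in the ring $\mathcal{R}$ of Dirichlet polynomials, which the paper does not prove but imports from \cite[Lemma 12]{Pat-Israel}. Your proposal never engages with that criterion. Instead you \emph{assume} Lemma~\ref{irred test 2} and sketch an application of it to $P_{L,\soc(L)}(s)$ --- that is, you are outlining a proof of Theorem~\ref{theorem irred}, a different statement. Nothing in your text explains why the two hypotheses (irreducibility of $h^{(\pi_0)}(s)$ together with the valuation condition $|h^{(\pi_0)}(s)|_v=|m|_v$ for $v\in\pi$) reduce the irreducibility of $h(s)$ to the coprimality $(h(s),h^{(\pi)}(s))=1$; such an argument would have to pass through the factoriality of $\mathcal{R}$ and $\mathcal{R}'_\pi$ and the isomorphism $\Phi$ with $\ZZ[X_\pi]$, none of which appears. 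This is the central gap: the target statement is simply not addressed.

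Even read as a proof of Theorem~\ref{theorem irred}, the argument fails at its first substantive step. With $\pi_0=\{p\}$, the only subgroups of $X$ of $p'$-index with nonvanishing M\"obius value are the Borel subgroups, so $P^{(p)}_{X,S}(s)=1-(p+1)^{1-s}$ and hence $P^{(p)}_{L,\soc(L)}(s)=1-(p+1)^{n(1-s)}=1-u^n$ where $u=(p+1)^{1-s}\in\mathcal{R}$ (note also that the correct coefficient at $((p+1)^n)^{-s}$ is $(p+1)^n$, not $(p+1)^{n-1}$). For $n\geq 2$ this factors as $(1-u)(1+u+\cdots+u^{n-1})$, so $h^{(\pi_0)}(s)$ is \emph{reducible} and the first hypothesis of Lemma~\ref{irred test 2} is violated; your coefficient-comparison argument overlooks that both factors may have support at index $p+1$ with cancelling cross terms. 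This is exactly why the paper instead takes $\pi_0=\{t\}$ with $t$ a prime dividing $p-1$ but not $p+1$ (available precisely because $p$ is not Mersenne): then $\Phi(P^{(t)}_{L,\soc(L)}(s))=1-ax^m$ with $a=bx_p^n+c$ a genuine binomial, which cannot be a nontrivial power, so Lemma~\ref{irred test 1} yields irreducibility, and the pair $\pi=\{p,r\}$ with $r$ a Zsigmondy prime for $\langle p,2\rangle$ then feeds into Lemma~\ref{irred test 2}.
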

\begin{definition}
Let $n\in\NN_{>1}$. A  prime number $p$ is called a \textit{primitive prime
divisor} of $a^n-1$ if it divides $a^n-1$ but does not divide $a^e-1$ for 
any integer $1\leq e\leq n-1$.
\end{definition}
\begin{proposition}{\cite{Zsigmondy}}
Let $a$ and $n$ be integers greater than $1$. There exists a primitive prime
divisor of $a^n-1$ except exactly in the following cases:
\begin{itemize}
\item $n=2, a=2^s-1$, where $s\geq 2$.
\item $n=6, a=2$.
\end{itemize}
\end{proposition}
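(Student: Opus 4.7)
The plan is to prove the classical Bang--Zsigmondy theorem via cyclotomic polynomials. Let $\Phi_d(x)\in\ZZ[x]$ denote the $d$-th cyclotomic polynomial, so that
$$a^n-1=\prod_{d\mid n}\Phi_d(a).$$
A prime $p$ is a primitive prime divisor of $a^n-1$ precisely when the multiplicative order of $a$ modulo $p$ equals $n$, and the strategy is to locate such a prime inside the single factor $\Phi_n(a)$, which encodes the ``new'' arithmetic content at level~$n$.

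The first step is to establish two standard lemmas on the prime factors of $\Phi_n(a)$. Lemma A: if a prime $p$ divides $\Phi_n(a)$, then either the order of $a$ modulo $p$ equals $n$ (so $p$ is a primitive prime divisor of $a^n-1$), or $p$ divides $n$. Lemma B: if $p$ divides $\gcd(\Phi_n(a),n)$, then $p$ is the largest prime factor of $n$ and $v_p(\Phi_n(a))=1$. Both follow from a direct analysis of the order of $a$ modulo $p$ combined with a lifting-the-exponent estimate. Together they imply that if no primitive prime divisor of $a^n-1$ exists, then $\Phi_n(a)\in\{1,P(n)\}$, where $P(n)$ denotes the largest prime factor of $n$; in particular $\Phi_n(a)\leq n$.

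The second step is a size estimate from below. Writing $\Phi_n(a)=\prod_{\zeta}(a-\zeta)$ over the primitive $n$-th roots of unity and using $|a-\zeta|\geq a-1$ with strict inequality for $n\geq 2$, one deduces $\Phi_n(a)>(a-1)^{\varphi(n)}$. Combining with the upper bound from Step~1 yields
$$(a-1)^{\varphi(n)}<n,$$
which, via a standard lower bound on $\varphi(n)$, has only finitely many solutions $(a,n)$ with $a,n\geq 2$ --- in fact a short explicit list.

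The final step is to inspect that list. For $n=2$ and $a=2^s-1$ one has $a^2-1=(a-1)(a+1)=2(2^{s-1}-1)\cdot 2^s$, so every prime dividing $a^2-1$ already divides $a-1=a^1-1$, forbidding a primitive prime. For $(a,n)=(2,6)$, $2^6-1=63=3^2\cdot 7$ with $3\mid 2^2-1$ and $7\mid 2^3-1$. All remaining pairs in the finite list admit a primitive prime by direct verification. The main obstacle is Step~2: one needs the lower bound on $\Phi_n(a)$ to be sharp enough that the resulting inequality $(a-1)^{\varphi(n)}<n$ leaves only a \emph{short} checkable list of exceptional pairs, which requires combining the geometric bound on $|a-\zeta|$ with classical lower bounds on $\varphi(n)$ with some care.
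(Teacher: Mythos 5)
The paper offers no proof of this proposition; it is quoted verbatim as Zsigmondy's classical theorem with a citation to the 1892 original, so there is nothing internal to compare your argument against. Your cyclotomic-polynomial strategy is the standard modern route, and Steps 1 and 3 are sound: Lemma A and Lemma B are correct (multiple root of $x^n-1$ modulo $p$ forces $p\mid n$; the order of $a$ modulo $p$ divides $p-1$, which pins $p$ down as the largest prime factor of $n$, and lifting the exponent gives $v_p(\Phi_n(a))=1$), and the verification of the two exceptional families is fine.

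The genuine gap is in Step 2. The inequality you derive, $(a-1)^{\varphi(n)}<n$, does \emph{not} have finitely many solutions: for $a=2$ the left-hand side is identically $1$, so every $n\geq 2$ satisfies it and no finite list emerges. This is not a matter of applying lower bounds for $\varphi(n)$ ``with some care'' --- the bound $\Phi_n(a)>(a-1)^{\varphi(n)}$ is simply vacuous at $a=2$, which is exactly the case containing the true exception $(a,n)=(2,6)$ and the case where $\Phi_n(a)$ can genuinely be small (e.g.\ $\Phi_6(2)=3$). To close the argument you need a sharper lower bound that remains nontrivial at $a=2$: for instance, write $n=p^s u$ with $p=P(n)$ and $p\nmid u$, use $\Phi_n(a)=\Phi_{u}(b^{p})/\Phi_{u}(b)$ with $b=a^{p^{s-1}}$ (valid since $p\nmid u$), and bound this quotient below by $\bigl((b^{p}-1)/(b+1)\bigr)^{\varphi(u)}$, which exceeds $p$ outside a small explicit set of pairs; alternatively, establish an estimate of the shape $\Phi_n(a)>a^{\varphi(n)/2}$ for $n$ outside a short list. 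As written, your reduction to ``a short explicit list'' fails for the entire family $a=2$, so the proof is incomplete at its central step.
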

Notice that there may be more than one primitive prime divisor of $a^n-1$. Such
a prime is called a \textit{Zsigmondy prime} for $\langle a,n\rangle$.

\begin{proof}[Proof of Theorem~\ref{theorem irred}]
If $p=5$ then $\PSL(2,5)\cong\rm{Alt}(5)$ and the result follows from {\cite[Theorem~2]{Pat-Trans}}.
Assume now that $p>5$ and set $S:=\PSL(2,p)$.
Let $t$ be the largest prime divisor of $(p-1)$ such that $t$ does not divide $(p+1)$. Set $\pi_0=\{t\}$. Since $p$ is not a Mersense prime, $\pi_0$ is non-empty. Let $r$ be a Zsigmondy prime for $\langle p,2\rangle$ and denote by $x=x_r$ the indeterminate corresponding to the
prime $r$. Let $D=\ZZ[X_{\pi(S)\setminus\{r\}}]$.

We first claim that $P^{(t)}_{L,\soc(L)}(s)$ is irreducible. Let
$X:=X_S$ be its associated almost simple group as in Section~\ref{sec 2}. Note that  
$P^{(t)}_{L,\soc(L)}(s)=P^{(t)}_{X,S}(ns-n+1)$; cf.\ Proposition~\ref{Seral}.

\textit{Case (i): $X=S=\PSL(2,p)$}. Then $P^{(t)}_{X,S}(ns-n+1)=P^{(t)}_S(ns-n+1)$.
It follows from {\cite[Section 7]{Pat-Pacific}} that
$$f(x)=\Phi(P^{(t)}_{L,\soc(L)}(s))=1-ax^m,$$
for some $m\in\NN$ and $a=bx_p^n+c$ with $b,c\in \ZZ[X_{\pi(S)\setminus\{r,p\}}]$. By inspection, $b$ and $c$ are nonzero and hence $a$ or $-a$
can not be a non-trivial power in $D$. Hence $P^{(t)}_{L,\soc(L)}(s)$
is irreducible by Lemma~\ref{irred test 1}.

\textit{Case (ii): $X=\rm{PGL}(2,p)$}. It follows from {\cite[Lemma 5]{VoLuMo2003}} that
$$P_{X,S}(s)=-\sum_{H\leq S}\frac{\mu_X(H)}{|S:H|^s}.$$
The list of subgroups $H$ and $\mu_X(H)$
are described in {\cite[Section~3]{VoLuMo2003}}. By inspection and argument as
in Case~(i), we obtain
the irreducibility of $P_{L,\soc(L)}^{(t)}(s)=P^{(t)}_{X,S}(ns-n+1)$.

Let $\pi=\{p,r\}$. One has that $|P_{L,\soc(L)}^{(t)}(s)|_p=|\PSL(2,p)|_p^n$ and
$|P_{L,\soc(L)}^{(t)}|_r=|\PSL(2,p)|_r^n$. Moreover $P_{L,\soc(L)}^{(\pi\cup\{t\})}(s)=1$. Hence it follows from Lemma~\ref{irred test 2} that
$P_{L,\soc(L)}(s)$ is irreducible.
\end{proof}

\begin{corollary}\label{cor irred}
Let $L$ be a monolithic primitive group with socle $\soc(L)\cong\PSL(2,p)^n$,
where $p\geq 5$ is a non-Mersense prime. Then the Dirichlet polynomial $\widetilde{P}_{L,\soc(L)}(s)$ is irreducible.
\end{corollary}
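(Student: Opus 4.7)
The plan is to piggyback on the proof of Theorem~\ref{theorem irred}, observing that $\widetilde{P}_{L,\soc(L)}(s)$ differs from $P_{L,\soc(L)}(s)$ by a single monomial whose denominator is divisible by every prime that drives that proof.

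First I would unpack the definition. Since $A:=\soc(L)\cong\PSL(2,p)^n$ is nonabelian, $q_A=1$, and hence there exists $i\geq 1$ with
$$
\widetilde{P}_{L,\soc(L)}(s) \;=\; P_{L,\soc(L)}(s) \,-\, \frac{(i-1)\gamma_A}{|\soc(L)|^s}.
$$
The case $i=1$ is Theorem~\ref{theorem irred} itself, so I assume $i\geq 2$. Retaining the notation of that proof, let $t$ be the largest prime divisor of $p-1$ not dividing $p+1$, let $r$ be a Zsigmondy prime for $\langle p,2\rangle$, and set $\pi_0=\{t\}$, $\pi=\{p,r\}$. The key observation is that $|\soc(L)|=|\PSL(2,p)|^n$ is divisible by each of $t$, $p$, and $r$, so the extra monomial $(i-1)\gamma_A/|\soc(L)|^s$ is annihilated by each of the truncations $(\cdot)^{(t)}$, $(\cdot)^{(\pi)}$, and $(\cdot)^{(\pi\cup\{t\})}$. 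Consequently
$$
\widetilde{P}_{L,\soc(L)}^{(t)}(s)=P_{L,\soc(L)}^{(t)}(s),\qquad
\widetilde{P}_{L,\soc(L)}^{(\pi)}(s)=P_{L,\soc(L)}^{(\pi)}(s),\qquad
\widetilde{P}_{L,\soc(L)}^{(\pi\cup\{t\})}(s)=1.
$$

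With these identities the machinery of the proof of Theorem~\ref{theorem irred} transfers essentially verbatim: $\widetilde{P}_{L,\soc(L)}^{(t)}(s)$ is irreducible; the $p$- and $r$-parts of the lcm of the denominators appearing in $\widetilde{P}_{L,\soc(L)}(s)$ still equal $|\PSL(2,p)|_p^n$ and $|\PSL(2,p)|_r^n$, because these maxima are attained by terms with $t$-free denominators, which are untouched by the subtraction; and the coprimality $(\widetilde{P}_{L,\soc(L)}(s),\widetilde{P}_{L,\soc(L)}^{(\pi)}(s))=1$ collapses to the coprimality already established for $P_{L,\soc(L)}(s)$. An application of Lemma~\ref{irred test 2} then concludes the proof.

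The step I expect to require the most care is confirming that the new coefficient at $|\soc(L)|^{-s}$ does not manufacture an unforeseen common factor with $\widetilde{P}_{L,\soc(L)}^{(\pi)}(s)$; however, since this truncation coincides with $P_{L,\soc(L)}^{(\pi)}(s)$, any such common factor would have to divide $P_{L,\soc(L)}(s)$ modulo a term whose denominator is divisible by both $p$ and $r$, and the same bookkeeping used in the proof of Theorem~\ref{theorem irred} forces it to be a unit. Hence Lemma~\ref{irred test 2} yields the irreducibility of $\widetilde{P}_{L,\soc(L)}(s)$, proving the corollary.
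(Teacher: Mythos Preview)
Your argument is correct and is precisely the justification the paper leaves implicit: the corollary is stated there without proof, as an immediate consequence of Theorem~\ref{theorem irred}. Your key observation---that the single extra monomial $(i-1)\gamma_A/|\soc(L)|^s$ has denominator divisible by each of $t$, $p$, and $r$, and is therefore annihilated by every truncation $(\cdot)^{(t)}$, $(\cdot)^{(\pi)}$, $(\cdot)^{(\pi\cup\{t\})}$ appearing in that proof---is exactly the point, and with it the proof of Theorem~\ref{theorem irred} transfers verbatim to $\widetilde{P}_{L,\soc(L)}(s)$.

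Your closing caveat about the coprimality step is more cautious than necessary. The paper's application of Lemma~\ref{irred test 2} to $P_{L,\soc(L)}(s)$ rests only on the three facts that $P_{L,\soc(L)}^{(t)}(s)$ is irreducible, that $|P_{L,\soc(L)}^{(t)}(s)|_v=|m|_v$ for $v\in\{p,r\}$, and that $P_{L,\soc(L)}^{(\pi\cup\{t\})}(s)=1$; since each of these is a statement about a truncation that agrees for $P$ and $\widetilde{P}$ (and since the lcm condition is likewise unchanged, as you note), whatever deduction the paper makes from them applies identically with $\widetilde{P}_{L,\soc(L)}(s)$ in place of $P_{L,\soc(L)}(s)$. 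No separate bookkeeping for the modified coefficient at $|\soc(L)|^{-s}$ is needed.
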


\section{Proof of Theorem~\ref{main theorem}}

Analogous to {\cite[Proposition 28]{Pat-Trans}}, we have the following crucial result.

\begin{proposition}\label{prop: crucial}
Let $X$ be an almost simple group with socle $\PSL(2,p)$, and
$Z$ an almost simple group such that $P_{X,\PSL(2,p)}^{(r)}=
P_{Z,\soc(Z)}^{(r)}(s)$ for every prime $r\leq p$. Then $\soc(Z)\cong\PSL(2,p)$.
\end{proposition}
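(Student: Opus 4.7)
The plan is to follow the template of \cite[Proposition~28]{Pat-Trans}, combining the prime-by-prime equality of the two Dirichlet polynomials with Dickson's description of the subgroup lattice of $\PSL(2,p)$ and, at the end, the classification of finite simple groups to identify $T:=\soc(Z)$.

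First I would do an elementary support analysis. Since $p\geq 5$, every prime divisor of $|\PSL(2,p)|=p(p-1)(p+1)/2$ is at most $p$, and hence the same holds for every index $n$ in the support of $P_{X,\PSL(2,p)}(s)$. Since the primorial of $p$ is larger than $|X|$ for all $p\geq 7$ (the case $p=5$ can be handled by direct inspection since $\PSL(2,5)\cong\rm{Alt}(5)$), no such $n$ is divisible by every prime $\leq p$, so there always exists $r\leq p$ with $r\nmid n$, and the hypothesis then yields $a_n=b_n$ for every such $n$. Symmetrically, one controls which indices in the support of $P_{Z,T}(s)$ can involve primes larger than $p$. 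The outcome is that the two Dirichlet polynomials agree except possibly on terms whose index is divisible by a prime larger than $p$.

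Next I would read off structural information from the lowest-degree terms, as in the proof of Theorem~\ref{theorem irred}. The shape of $P_{X,\PSL(2,p)}(s)$ is explicit from \cite{VoLuMo2003,Pat-Pacific}: the minimum index of a core-free maximal subgroup of $X$ is $p+1$ (coming from a Borel), except in the small cases $p\in\{5,7,11\}$ where an exceptional maximal subgroup gives index $p$, and the corresponding M\"obius coefficient is $-1$. Transferring this to $P_{Z,T}(s)$ forces $T$ to admit a faithful transitive permutation action of degree at most $p+1$, which by O'Nan--Scott and the classification of primitive groups of small degree restricts $T$ to a short list: $\PSL(2,p)$, $\rm{Alt}(p+1)$, and a few Lie-type or sporadic exceptions with suitably small permutation representations. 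To eliminate these remaining candidates I would refine the comparison at individual primes. Let $r$ be a Zsigmondy prime for $\langle p,2\rangle$ and let $t$ be as in the proof of Theorem~\ref{theorem irred}. Applying Proposition~\ref{Seral} together with the explicit formula for $P_{X,S}^{(t)}(s)$ derived there, the quantities $|P_{X,S}^{(r')}(s)|_p$ and $|P_{X,S}^{(r')}(s)|_r$ can be computed for each prime $r'\leq p$; matching with $P_{Z,T}(s)$ pins down the $p$-part and the $r$-part of $|T|$ to equal those of $|\PSL(2,p)|$. Iterating across all primes $\leq p$ determines $|T|$ up to primes $>p$, and combining this with the degree restriction from the previous step forces $T\cong\PSL(2,p)$.

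The main obstacle is the elimination step: several simple groups have orders agreeing with $|\PSL(2,p)|$ at every prime up to some threshold, and one typically needs to penetrate past the first few coefficients of $P_{Z,T}(s)$---invoking the M\"obius contributions coming from intersections of two or more maximal subgroups---in order to detect a discrepancy. This is directly parallel to the detailed case-by-case analysis performed in \cite[Proposition~28]{Pat-Trans}, and implementing it in the $\PSL(2,p)$ setting will rely heavily on Dickson's lattice description of the subgroups of $\PSL(2,p)$ and on the order comparison $|\rm{Alt}(p+1)|\gg|\PSL(2,p)|$ to dispose of the alternating and sporadic cases.
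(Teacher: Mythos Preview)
Your overall strategy---match the minimal index, deduce that $Z$ acts primitively in degree $p+1$, then eliminate rival socles---is the same as the paper's, but you miss the one idea that makes the elimination short. The paper does not attempt to read off $|T|_p$ or $|T|_r$ from the quantities $|P_{X,S}^{(r')}(s)|_v$, nor does it fall back on a general small-degree classification. Instead, after establishing that $Z$ is primitive of degree $p+1$ it observes (via $P^{(2)}$) that $p\mid |Z|$; since $p>(p+1)/2$, a Sylow $p$-subgroup acts as a $p$-cycle, and Zieschang's classification of primitive groups containing a $p$-cycle \cite{Chang} immediately reduces to $\soc(Z)\in\{\PSL(2,p),\ \mathrm{Alt}(p+1)\}$ or $Z=M_{24}$. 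The remaining two candidates are then killed by single coefficient checks: for $\mathrm{Alt}(p+1)$ one invokes Bertrand to find a prime $l$ with $(p+1)/2<l<p-1$ and uses $P^{(p)}$ together with \cite[Theorem~1.1]{DaLu07} to force $l\mid |X|$, a contradiction; for $M_{24}$ one notes a maximal subgroup of index $7\cdot 11\cdot 23$ and compares $P^{(2)}$. The small cases $p=5,11$ are handled separately (and $p=7$, being Mersenne, does not arise).

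Your proposed elimination step is the weak point. Knowing $|P_{X,S}^{(r')}(s)|_v$ only tells you the largest $v$-power occurring among indices of supplements in $X$; transporting that to $Z$ constrains indices of subgroups of $Z$, not the $v$-part of $|T|$ itself, so ``pinning down $|T|_p$ and $|T|_r$'' does not follow as stated. You could perhaps push a case-by-case analysis through, but it would be substantially heavier than the $p$-cycle argument, which is the step you should add.
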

\begin{proof} 
Let $n$ be the minimal index of a proper subgroup of $X$ which supplements $\PSL(2,p)$. 

If $p=5$ then $n=5$ and  $\PSL(2,5)\cong\rm{Alt}(5)$. The result follows from {\cite[Proposition 28]{Pat-Trans}}. 
If $p=11$ then $X=\PSL(2,11)$, $n=11$ and (cf. {\cite[Section 7]{Pat-Pacific}})
$$P_{\PSL(2,11)}(s)=1-\frac{22}{11^s}-\frac{12}{12^s}+\frac{66}{66^s}+\frac{220}{110^s}
+\frac{132}{132^s}+\frac{165}{165^s}-\frac{220}{220^s}-\frac{990}{330^s}+\frac{660}{660^s}.$$
By GAP~\cite{GAP4}, the possibility for $Z$ is that $Z=M_{11}$. However by considering  $P^{(2)}_{\PSL(2,11)}(s)=P^{(2)}_{Z,\soc(Z)}(s)$ and noting that $M_{11}$ has a maximal subgroup of index $55$ (cf.~\cite{Atlas}), we obtain a contradiction. This implies
$\soc(Z)\cong\PSL(2,11)$. 

Assume now that $p>11$. Then $n=p+1$. 
Since $P^{(p)}_{X,\PSL(2,p)}(s)=P^{(p)}_{Z,\soc(Z)}(s)$ and $a_{p+1}(X,\PSL(2,p))\ne0$; see {\cite[Section 7]{Pat-Pacific}} and
{\cite[Section 4]{VoLuMo2003}}, it follows 
that $a_{p+1}(X,\PSL(2,p))=a_{p+1}(Z,\soc(Z))\ne0$. Thus the minimal index $k$ of a proper subgroup of $Z$ is at most $p+1$. For a contradiction, assume that $k<p+1$. Since $P^{(p)}_{X,\PSL(2,p)}(s)=P^{(p)}_{Z,\soc(Z)}(s)$,  $k$ should be divisible by $k$, so $k=p$. By considering  $P^{(2)}_{X,\PSL(2,p)}(s)=P^{(2)}_{Z,\soc(Z)}(s)$, since $a_p(X,\PSL(2,p))=0$ (see {\cite[Section 7]{Pat-Pacific}} and
{\cite[Section 4]{VoLuMo2003}}),  we get $a_p(Z,\soc(Z))=a_p(X,\PSL(2,p))=0$, a contradiction. Hence $k=p+1$. Since $Z$ is  almost simple, it is a primitive group of degree $p+1$. 

Since $P^{(2)}_{X,\PSL(2,p)}(s)=P^{(2)}_{Z,\soc(Z)}(s)$,  {\cite[Section 7]{Pat-Pacific}} and
{\cite[Section 4]{VoLuMo2003}} implies
that $p$ divides $|Z|$. Thus $Z$ contains a $p$-cycle, since $Z$
is a primitive group of degree $p+1$ and $p>(p+1)/2$.
It follows from \cite{Chang} that either $Z=M_{24}$ or $\soc(Z)=\rm{Alt}(p+1)$ or $\soc(Z)=\PSL(2,p)$.
If $Z=M_{24}$, then by considering $P^{(2)}_{X,\PSL(2,23)}(s)=P^{(2)}_{Z,\soc(Z)}(s)$ and noting that $M_{24}$ has
a maximal subgroup of index $7\cdot11\cdot23$, one gets a contradiction.
Assume that $\soc(Z)=\rm{Alt}(p+1)$. 
By Bertrand's postulate (cf. \cite{Ramanujan}),  there exists a prime $l$ such that $(p+1)/2<l<p-1$. By considering $P^{(p)}_{X,\PSL(2,p)}(s)=P^{(p)}_{Z,\soc(Z)}(s)$ one obtains
from {\cite[Theorem 1.1]{DaLu07}} that $l$  divides $|X|$, which is a contradiction. As a conclusion, we have that $\soc(Z)\cong\PSL(2,p)$.
\end{proof}

\begin{proof}[Proof of Theorem~\ref{main theorem}]
The proof is analogous to that of {\cite[Theorem 3]{Pat-Trans}}. We present here for the sake of completeness of the paper.

By Theorem~\ref{abelian},  $G$ and $H$ have the same  non-Frattini abelian chief factors. 
Thus we may assume that $G$ and $H$ have no non-Frattini abelian chief factors.

Let $\cal{CF}(G)$ be the set of the non-Frattini chief factors of $G$.
For each $A\in\cal{CF}(G)$, the polynomial $\widetilde{P}_{L,A}(s)$
is irreducible; cf.~Corollary~\ref{cor irred}. Hence
$$P_G(s)=\prod_{A\in\cal{CF}(G)}\widetilde{P}_{L,A}(s)$$
is a factorization of $P_G(s)$ into irreducible factors.
Thus, there is a bijection between the sets $\cal{CF}(G)$ and $\cal{CF}(H)$ such that $A\cong S_A^{n_A}\in\cal{CF}(G)$ and 
$B\cong S_B^{n_B}\in\cal{CF}(H)$ are associated if and only if $\widetilde{P}_{L_A,A}(s)=\widetilde{P}_{L_B,B}(s)$.
Since $\widetilde{P}^{(r)}_{L_A,A}(s)=\widetilde{P}^{(r)}_{L_B,B}(s)$ for every $r\in\pi(A)$. It follows that $n_A=n_B$, cf.~{\cite[Proposition~27]{Pat-Trans}}. Thus $\widetilde{P}^{(r)}_{X_A,S_A}(s)=\widetilde{P}^{(r)}_{X_B,S_B}(s)$ for every $r\in\pi(A)$, cf. Proposition~\ref{Seral}.
Proposition~\ref{prop: crucial} implies that $S_A\cong S_B$. Therefore $A\cong S_A^{n_A}\cong S_B^{n_B}\cong B$ as desired.
\end{proof}


\subsection*{Acknowledgment}
This research is supported by the DFG Sonderforschungsbereich 701 at
Bielefeld University.
We acknowledge the referee for valuable comments. We also thank Gareth Jones and Massimiliano Patassini for useful discussions.

 -------------------------------------------------------------------
\end{document}